\newtheorem{thm}{Theorem}[section]
\newtheorem{prop}[thm]{Proposition}
\newtheorem{define}[thm]{Definition}
\newtheorem{lemma}[thm]{Lemma}
\newcommand{\p}{\partial}
\newcommand{\om}{\omega}
\newcommand{\na}{\nabla}
\numberwithin{equation}{section}
\subjclass[2000]{35Q35, 35B35, 35B65, 76D03}
\keywords{2D Boussinesq equations, classical solutions, global regularity, small data}
\begin{document}
\title[Damped 2D Boussinesq equations]
{Small global solutions to the damped two-dimensional Boussinesq equations}

\author[D. Adhikari, C. Cao, J. Wu, X. Xu]{Dhanapati Adhikari$^{1}$, Chongsheng Cao$^{2}$,
Jiahong Wu$^{3}$, Xiaojing Xu$^{4}$}

\address{$^1$ Mathematical Sciences Department, Marywood University,
Scranton, PA 18509, USA.}

\email{dadhikari@marywood.edu}

\address{$^2$ Department of Mathematics,
Florida International University,
Miami, FL 33199, USA.}

\email{caoc@fiu.edu}

\address{$^3$ Department of Mathematics,
Oklahoma State University,
401 Mathematical Sciences,
Stillwater, OK 74078, USA; and
Department of Mathematics,
College of Natural Science,
Chung-Ang University,
Seoul 156-756, Korea}

\email{jiahong@math.okstate.edu}

\address{$^4$  School of Mathematical Sciences, Beijing Normal
University and Laboratory of Mathematics and Complex Systems, Ministry of
Education, Beijing 100875, P. R. China.}

\email{xjxu@bnu.edu.cn}

\vskip .2in
\begin{abstract}
The two-dimensional (2D) incompressible Euler equations have been thoroughly investigated and
the resolution of the global (in time) existence and uniqueness issue is currently in a satisfactory status.
In contrast, the global regularity problem concerning the 2D inviscid Boussinesq equations remains
widely open. In an attempt to understand this problem, we examine the damped 2D Boussinesq equations and
study how damping affects the regularity of solutions. Since the damping effect is
insufficient in overcoming the difficulty due to  the ``vortex stretching", we seek unique global small
solutions and the efforts have been mainly devoted to minimizing the smallness assumption. By positioning
the solutions in a suitable functional setting (more precisely the homogeneous Besov space
$\mathring{B}^1_{\infty,1}$), we are able to obtain a unique global solution under a minimal smallness
assumption.
\end{abstract}

\maketitle

\section{Introduction}

This paper examines the global (in time) existence and uniqueness problem on the incompressible
2D Boussinesq equations with damping
\begin{equation} \label{dbouss}
\begin{cases}
\p_t u + (u\cdot\nabla) u + \nu u = -\nabla p + \theta \mathbf{e}_2, \quad x\in \mathbb{R}^2,\, t>0,\\
\p_t \theta + (u\cdot\nabla) \theta + \lambda \theta =0, \quad x\in \mathbb{R}^2,\, t>0,\\
\nabla \cdot u =0, \quad x\in \mathbb{R}^2,\, t>0,\\
u(x,0) =u_0(x), \quad \theta(x,0) =\theta_0(x),  \quad x\in \mathbb{R}^2,
\end{cases}
\end{equation}
where $u$ represents the fluid velocity, $p$ the pressure, $\mathbf{e}_2$ the unit
vector in the vertical direction, $\theta$ the temperature in thermal convection
or the density in geophysical flows, and $\nu>0$ and $\lambda>0$ are real parameters.
When $\nu u$ is replaced by $-\nu \Delta u$ and $\lambda \theta$ by
$-\lambda\Delta \theta$,\, (\ref{dbouss}) becomes the standard viscous Boussinesq
equations. (\ref{dbouss}) with $\nu=0$ and $\lambda=0$ reduces to the inviscid
2D Boussinesq equations. If $\theta$ is identically zero, (\ref{dbouss}) degenerates to the
2D incompressible Euler equations.

\vskip .1in
The Boussinseq equations model many geophysical flows such as atmospheric fronts and ocean
circulations (see, e.g., \cite{Con_D,Gill,Maj,Pe}). Mathematically the 2D Boussinesq equations
serve as a lower-dimensional model of the 3D hydrodynamics equations. In fact, the
2D Boussinesq equations retain some key features of the 3D Euler and Navier-Stokes
equations such as the vortex stretching mechanism. The vortex stretching term
is the greatest obstacle in dealing with the global regularity issue
concerning the Boussinesq equations. When suitable partial dissipation or fractional
Laplacian dissipation with sufficiently large index is added, the vortex stretching can be
controlled and the global regularity can be established (see, e.g., \cite{ACW10,ACW11,BSch,CaoWu1,Ch,ChaeWu,CV,
DP3,HS,Hmidi,HKR1,HKR2,HL,KRTW,Lai,LLT,MX,Mof,WangZ,Xu,Zhao}). In contrast, the global regularity problem
on the inviscid Boussinesq equations appears to be out of reach in spite of the progress on the local
well-posedness and regularity criteria (see, e.g., \cite{ChN,Cui,Dan,ES,MB,Mof,Oh,XY}).
This work is partially aimed at understanding
this difficult problem by examining how damping affects the regularity of the solutions to the
Boussinesq equations.

\vskip .1in
As we know, the issue of global existence and uniqueness relies crucially on whether or not one
can obtain global bounds on the solutions. Thanks to the
divergence-free condition $\nabla \cdot u=0$, global {\it a priori} bounds for $\theta$ in any Lebesgue
space $L^q$ and $u$ in $L^2$ follow directly from simple energy estimates,
$$
\|\theta(t)\|_{L^q} \le \|\theta_0\|_{L^q}, \qquad
\|u(t)\|_{L^2} \le \|u_0\|_{L^2} + t\,\|\theta_0\|_{L^2}
$$
for $1\le q\le \infty$. However, global bounds for $(u, \theta)$ in any Sobolev space, say $H^1$, can not be
easily achieved and the difficulty comes from the vortex stretching term. More precisely, if we resort to
the equations of the
vorticity $\om$ and $\nabla^\perp \theta$
\begin{equation} \label{vorticity}
\begin{cases}
\p_t \om + (u\cdot\nabla) \om + \nu \om = \partial_{x_1} \theta, \\
\p_t (\na^\perp \theta) + (u\cdot\nabla) (\na^\perp\theta)
+ \lambda \na^\perp\theta =(\na^\perp \theta\cdot \na) u,
\end{cases}
\end{equation}
we unavoidably have to deal with the ``vortex stretching term" $(\na^\perp \theta\cdot \na) u$, which appears to elude
any suitable bound. Here $\na^\perp =(-\partial_{x_2}, \partial_{x_1})$.
The damping terms are not sufficient to overcome this difficulty. Therefore damping does not appear to make a big
difference in dealing with solutions emanating from a general data.

\vskip .1in
The aim here is at the global existence and uniqueness of small solutions. We remark that,
if we consider solutions of (\ref{dbouss}) with initial data in the classical setting, say $(u_0, \theta_0)\in H^s$
with $s>2$,
then it is not difficult to prove the global existence of solutions when we impose a very strong smallness
condition such as
\begin{equation}\label{small1}
1+ \|u_0\|_{H^s} + \|\theta_0\|_{H^s} \le C\min\{\nu, \lambda\},
\end{equation}
where $C$ is a suitable constant independent of $\nu$ and $\lambda$. In fact, the global regularity follows easily
from the local well-posedness in $H^s$ and the global energy inequality for
$Y(t)\equiv \|u(t)\|_{H^s} + \|\theta(t)\|_{H^s}$,
\begin{equation}\label{energy0}
\frac{d}{dt} Y(t) + \min\{\nu, \lambda\}\, Y(t) \le C (1 + \|u\|_{H^s} + \|\theta\|_{H^s})\,Y(t).
\end{equation}
However, the smallness assumption (\ref{small1}) appears to be too restrict.
In particular, it forces $\nu$ and $\lambda$ to be of order 1. It appears that (\ref{small1})
can not be easily weakened if we seek solutions in the classical functional setting.
This is due to the presence of the forcing term $\theta \mathbf{e}_2$ in the velocity equation and the
growth of $\|u(t)\|_{H^s}$ in time. In fact, as shown by Brandolese
and Schonbek for the 3D viscous Boussinesq equations, the $L^2$-norm of $u$ may grow in time when
the spatial integral of $\theta_0$ is not zero and when $\theta$ does not decay sufficiently
fast in time \cite{BSch}. Our efforts have been devoted to seeking a suitable functional
setting so that (\ref{small1}) can be relaxed. The right functional space is the homogeneous
Besov space $\mathring{B}^1_{\infty,1}$ and our main result can be stated as follows.

\begin{thm} \label{main}
Consider (\ref{dbouss}) with $\nu>0$ and $\lambda>0$. Assume that
$(u_0,\theta_0) \in L^2$ obeys the smallness conditions
\begin{equation}\label{smallness}
\|\nabla u_0\|_{\mathring{B}^0_{\infty,1}} < A_0 \equiv \min\left\{\frac\nu{2C_0}, \frac\lambda{C_0}\right\}
\quad \mbox{and}\quad \|\nabla \theta_0\|_{\mathring{B}^0_{\infty,1}}
< B_0\equiv \frac{\nu}{2C_0} \|\nabla u_0\|_{\mathring{B}^0_{\infty,1}}
\end{equation}
for a suitable constant $C_0$ independent of $\nu$ and $\lambda$. Then (\ref{dbouss}) has a unique global solution $(u, \theta)$
satisfying
\begin{equation}\label{regclass}
 (u, \theta)\in L^\infty([0,\infty); L^2), \quad \nabla u, \nabla \theta \in L^\infty([0,\infty); \mathring{B}^0_{\infty,1}).
\end{equation}
In addition,
$$
\sup_{t\geq 0}\|\nabla u(t)\|_{\mathring{B}^0_{\infty,1}} <  A_0 \quad \mbox{and}\quad
\quad \sup_{t\geq0}\|\nabla \theta(t)\|_{\mathring{B}^0_{\infty,1}} < B_0.
$$
\end{thm}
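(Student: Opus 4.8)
The plan is to obtain the global solution by combining a local existence statement in the class \eqref{regclass} with \emph{a priori} bounds in $\mathring B^0_{\infty,1}$ derived through Littlewood--Paley analysis, and then to close a continuation argument using the damping and the smallness of the data.

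\medskip\noindent\emph{Step 1: local well-posedness and uniqueness.} For $(u_0,\theta_0)\in L^2$ with $\nabla u_0,\nabla\theta_0\in\mathring B^0_{\infty,1}$ I would first establish a unique local solution on some $[0,T)$ in the class \eqref{regclass}. Since $\mathring B^0_{\infty,1}\hookrightarrow L^\infty$, the velocity gradient is bounded, so this is of the same nature as the classical $\mathring B^1_{\infty,1}$ (equivalently $C^{1,\varepsilon}$-type) theory for the 2D Euler equations; the linear damping terms $\nu u$, $\lambda\theta$ are benign and the buoyancy forcing $\theta\mathbf e_2$ is controlled by the energy bounds recorded in the introduction, which under $\nu,\lambda>0$ improve to $\|\theta(t)\|_{L^2}\le\|\theta_0\|_{L^2}$ and $\|u(t)\|_{L^2}\le\|u_0\|_{L^2}+\nu^{-1}\|\theta_0\|_{L^2}$ (the latter from $\ddt\|u\|_{L^2}+\nu\|u\|_{L^2}\le\|\theta\|_{L^2}$). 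Construction proceeds by a standard mollification/compactness scheme, and the local time together with the persistence of regularity depends only on $\|u_0\|_{L^2}+\|\theta_0\|_{L^2}+\|\nabla u_0\|_{\mathring B^0_{\infty,1}}+\|\nabla\theta_0\|_{\mathring B^0_{\infty,1}}$. Uniqueness follows from an $L^2$ energy estimate for the difference of two solutions: the pressure drops out by $\nabla\cdot u=0$, $\|\nabla u\|_{L^\infty}$ and $\|\nabla\theta\|_{L^\infty}$ are finite, and Gr\"onwall closes it.

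\medskip\noindent\emph{Step 2: the key differential inequalities.} Set $X(t)=\|\nabla u(t)\|_{\mathring B^0_{\infty,1}}$ and $Z(t)=\|\nabla\theta(t)\|_{\mathring B^0_{\infty,1}}$. Applying $\dot\Delta_j$ to the vorticity system \eqref{vorticity}, I would use that the divergence-free transport operator $u\cdot\nabla$ does not increase $L^\infty$ norms, bound the commutators by the standard estimate $\sum_j\|[u\cdot\nabla,\dot\Delta_j]f\|_{L^\infty}\le C\|\nabla u\|_{\mathring B^0_{\infty,1}}\|f\|_{\mathring B^0_{\infty,1}}$, invoke the boundedness of the Calder\'on--Zygmund-type operator $\nabla\nabla^\perp\Delta^{-1}$ on $\mathring B^0_{\infty,1}$ to get $\|\om\|_{\mathring B^0_{\infty,1}}\approx\|\nabla u\|_{\mathring B^0_{\infty,1}}$, use the product estimate $\|fg\|_{\mathring B^0_{\infty,1}}\lesssim\|f\|_{L^\infty}\|g\|_{\mathring B^0_{\infty,1}}+\|f\|_{\mathring B^0_{\infty,1}}\|g\|_{L^\infty}$ (with $\mathring B^0_{\infty,1}\hookrightarrow L^\infty$) for the stretching term $(\na^\perp\theta\cdot\na)u$, and $\|\partial_{x_1}\theta\|_{\mathring B^0_{\infty,1}}\le Z$. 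Summing over $j\in\mathbb Z$ produces, with a single constant $C_0\ge1$,
\begin{equation*}
\ddt X+\nu X\le C_0 X^2+C_0 Z,\qquad \ddt Z+\lambda Z\le C_0 XZ.
\end{equation*}
This is exactly where $\mathring B^0_{\infty,1}$ rather than the $L^\infty$ endpoint is needed: the commutator and product estimates fail in $L^\infty$, and the damping coefficients $\nu$, $\lambda$ dominate the quadratic terms only once $X$ is small.

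\medskip\noindent\emph{Step 3: bootstrap and globalization.} Assume for contradiction that $T$ is the first time with $X(T)=A_0$. On $[0,T]$ one has $X\le A_0\le\lambda/C_0$, hence $\ddt Z\le-(\lambda-C_0X)Z\le0$ and $Z(t)\le Z(0)<B_0=\tfrac{\nu}{2C_0}\|\nabla u_0\|_{\mathring B^0_{\infty,1}}$ throughout. Using also $X\le A_0\le\nu/(2C_0)$ in the first inequality gives $\ddt X+\tfrac\nu2 X\le C_0Z(0)<\tfrac\nu2 X(0)$, and Gr\"onwall yields $X(t)<X(0)<A_0$ for $t>0$, contradicting $X(T)=A_0$. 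Therefore $X(t)<A_0$ and $Z(t)<B_0$ on the whole interval of existence, the continuation criterion forces $T=\infty$, and we get a global solution in the class \eqref{regclass} with $\sup_{t\ge0}X(t)=X(0)<A_0$ and $\sup_{t\ge0}Z(t)=Z(0)<B_0$; the $L^\infty([0,\infty);L^2)$ bounds on $(u,\theta)$ are the energy estimates of Step 1. The step I expect to be the main obstacle is Step 2 — carrying out the $\mathring B^0_{\infty,1}$ commutator and product estimates with clean, dimensionally consistent constants so that a single $C_0$ works and the quadratic structure is visible — after which everything reduces to an elementary ODE comparison.
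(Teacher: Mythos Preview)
Your proposal is correct and follows essentially the same route as the paper. The differential inequalities you write down in Step~2 are exactly those of the paper's Proposition~2.1 (specialized to $q=\infty$), and your bootstrap in Step~3 reproduces the paper's continuation argument almost line for line, including the use of $B_0=\tfrac{\nu}{2C_0}\|\nabla u_0\|_{\mathring B^0_{\infty,1}}$ to get $C_0Z(0)<\tfrac{\nu}{2}X(0)$ and hence $X(t)\le X(0)<A_0$.

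The only notable difference is the packaging of the existence argument. The paper builds smooth approximations $(u^N,\theta^N)$ via the Fourier cutoff $J_N$ and the Leray projector, proves the uniform bounds for them, extracts an $L^2$-convergent subsequence, and then spends a nontrivial Step~4 checking that the limit solves the equation in a weak space $\mathring H_*^{-\sigma}$; you instead invoke a local well-posedness theorem in the class \eqref{regclass} and a continuation criterion. Your route is cleaner to state but hides work: local existence in $L^2\cap\mathring B^1_{\infty,1}$ is not quite off-the-shelf (this is why the paper carries out the approximation and limit passage explicitly), so you would still need either to cite a precise reference for it or to sketch the same mollification/compactness argument. One small point: in Step~3 you only treat the case where $X$ reaches $A_0$ first; strictly you should also exclude $Z$ reaching $B_0$ first, but that is immediate since $\dot Z\le0$ whenever $X<A_0$.
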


More details on the homogeneous Besov space can be found in Appendix A. We remark that the smallness
condition (\ref{smallness}) is weaker than (\ref{small1}) in two senses: first,
the norm in $\mathring{B}^1_{\infty,1}$ (the smallness for $(\nabla u,\nabla \theta)$ in $\mathring{B}^0_{\infty,1}$
is equivalent to the smallness of $(u,\theta)$ in $\mathring{B}^1_{\infty,1}$) is weaker than the norm in $H^s$
due to the embedding $H^s(\mathbb{R}^2)\hookrightarrow \mathring{B}^1_{\infty,1}(\mathbb{R}^2)$ for $s>2$; and
second, (\ref{smallness}) does not include the factor $1$,
as in (\ref{small1}). $\mathring{B}^1_{\infty,1}$ appears to be a very natural setting if one wants to ensure the uniqueness
of the solutions. It may be difficult to further weaken the functional setting.

\vskip .1in
May it be possible to sharpen the result of Theorem \ref{main} by removing one of the damping terms
$\nu\, u$ or $\lambda\, \theta$?  This problem appears to be extremely challenging. For
the 2D Boussinesq equations, it appears that any functional setting guaranteeing the uniqueness of
solutions necessarily involves
the derivatives of the functions.  If $\lambda\, \theta$ is not present, the norm of $\theta$ in such a functional
setting may grow exponentially (in time) at the rate of $\|\nabla u\|_{L^\infty}$. Consequently the norm
of $u$ may grow even when the velocity equation has the damping term $\nu u$. It is also clear that, if
$\nu\, u$ is missing, then the norm of $u$ is expected to grow. Therefore, when any one of the damping terms
is removed, the small data well-posedness problem becomes as difficult as the well-posedness problem
for a general initial data.

\vskip .1in
The rest of this paper consists of a section that proves Theorem \ref{main} and an appendix that provides
the definitions of Besov spaces and related facts.

\vskip .3in
\section{Proof of Theorem \ref{main}}

This section is devoted to the proof of Theorem \ref{main}. The proof is lengthy and consists
of five major steps. The first step  constructs a sequence of approximate smooth solutions
while the second step shows that these approximate solutions obey global (in time) bounds
in the functional setting of the initial data.  One key component leading to the global bounds
is a global differential inequality, which we establish as a proposition. The third step is to
show that the sequence of approximate solutions consists of a strongly convergent subsequence.
The fourth step shows that the limit of the convergence actually solves the Boussinesq equations
in a suitable functional setting. This step involves extensive applications of the Besov spaces
techniques. The last step asserts the uniqueness of the solutions.

\vskip .1in
As a preparation for the proof of Theorem \ref{main}, we first state and prove a global differential
inequality.

\begin{prop}\label{globalbd}
Consider (\ref{dbouss}) with $\nu>0$ and $\lambda>0$. Assume $(u_0, \theta_0) \in L^2$
and $(\nabla u_0, \nabla \theta_0)\in \mathring{B}^0_{\infty,1}$.
Assume that  $(u, \theta)$ is the corresponding solution.
Then $(u, \theta)$ satisfies the differential
inequality, for any $q\in [2, \infty]$,
\begin{eqnarray}
&& \frac{d}{dt} \|\nabla u\|_{\mathring{B}^0_{q,1}} + \nu \|\nabla u\|_{\mathring{B}^0_{q,1}}
\le C_0\,\|\nabla u\|_{\mathring{B}^0_{\infty,1}}\,\|\nabla u\|_{\mathring{B}^0_{q,1}}
+ C_0\,\|\nabla \theta\|_{\mathring{B}^0_{q,1}}, \nonumber\\
&& \frac{d}{dt} \|\nabla\theta\|_{\mathring{B}^0_{q,1}} + \lambda \|\nabla\theta\|_{\mathring{B}^0_{q,1}}
\le C_0\,\|\nabla u\|_{\mathring{B}^0_{\infty,1}}\,\|\nabla\theta\|_{\mathring{B}^0_{q,1}},\label{apin}
\end{eqnarray}
where $C_0>0$ is a constant independent of $q$, $\nu$ and $\lambda$.
\end{prop}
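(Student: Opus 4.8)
The plan is to work directly with the vorticity formulation \eqref{vorticity}, applying the homogeneous Littlewood–Paley projection $\Delta_j$ to each equation and estimating in $L^q$ before summing over $j\in\mathbb{Z}$ with weights appropriate to $\mathring{B}^0_{q,1}$. First I would apply $\Delta_j$ to the $\om$-equation, obtaining
\[
\p_t \Delta_j\om + (u\cdot\nabla)\Delta_j\om + \nu\,\Delta_j\om = \Delta_j\p_{x_1}\theta + \big[(u\cdot\nabla),\Delta_j\big]\om,
\]
and then pair with $|\Delta_j\om|^{q-2}\Delta_j\om$ (or use the standard Chemin-type argument for the $L^q$-norm of a transported quantity), so that after integration the transport term $(u\cdot\nabla)\Delta_j\om$ drops out by $\nabla\cdot u=0$, the damping contributes $\nu\|\Delta_j\om\|_{L^q}$, and one is left with
\[
\ddt\|\Delta_j\om\|_{L^q} + \nu\|\Delta_j\om\|_{L^q} \le \|\Delta_j\p_{x_1}\theta\|_{L^q} + \big\|\big[(u\cdot\nabla),\Delta_j\big]\om\big\|_{L^q}.
\]
For $\nabla u$ rather than $\om$ one uses the Biot–Savart-type relation $\|\Delta_j\nabla u\|_{L^q}\simeq \|\Delta_j\om\|_{L^q}$ on each dyadic block (they differ by a Fourier multiplier of order zero, bounded uniformly in $j$ on $L^q$ for $1<q<\infty$, with the endpoints $q=\infty$ handled through the Besov summation); similarly $\|\Delta_j\p_{x_1}\theta\|_{L^q}\lesssim \|\Delta_j\nabla\theta\|_{L^q}$. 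Summing in $j$ then yields the first inequality in \eqref{apin} provided the commutator sum is controlled. The $\nabla\theta$-equation is treated the same way: applying $\Delta_j$ to the second line of \eqref{vorticity}, the right-hand side is $\Delta_j\big((\na^\perp\theta\cdot\na)u\big)$, which after the commutator decomposition and Bony paraproduct splitting is estimated by $\|\nabla u\|_{\mathring{B}^0_{\infty,1}}\|\nabla\theta\|_{\mathring{B}^0_{q,1}}$, giving the second inequality (no $\p_{x_1}\theta$-type forcing appears here, which is why the $\theta$-estimate closes without a $\|\nabla u\|$-independent term).

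The technical heart is the commutator estimate: one must show
\[
\sum_{j\in\mathbb{Z}} \big\|\big[(u\cdot\nabla),\Delta_j\big]\om\big\|_{L^q} \le C_0\,\|\nabla u\|_{\mathring{B}^0_{\infty,1}}\,\|\om\|_{\mathring{B}^0_{q,1}},
\]
with $C_0$ independent of $q$, $\nu$, $\lambda$. I would handle this with the standard Bony decomposition $u\cdot\nabla\om = T_u\nabla\om + T_{\nabla\om}u + R(u,\nabla\om)$, commuting $\Delta_j$ through each piece. The paraproduct term $[\Delta_j, T_u\cdot\nabla]\om$ is the classical one and is bounded, via a first-order Taylor expansion of the convolution kernel of $\Delta_j$ together with the support properties of the Littlewood–Paley blocks, by $\|\nabla u\|_{L^\infty}\sum_{|k-j|\le N}\|\Delta_k\om\|_{L^q}$; the remaining terms $T_{\nabla\om}u$ and the remainder $R$ are estimated directly (no commutator structure needed) using $\|\nabla u\|_{L^\infty}$ and the summability built into the $\ell^1$ index of $\mathring{B}^0_{\infty,1}$. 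Summing over $j$ and using Young's inequality for the $\ell^1$–$\ell^\infty$ convolution in the frequency variable produces exactly the claimed product $\|\nabla u\|_{\mathring{B}^0_{\infty,1}}\|\om\|_{\mathring{B}^0_{q,1}}$. The bilinear term $(\na^\perp\theta\cdot\na)u$ in the $\theta$-estimate is bounded by the same Bony/paraproduct machinery, placing $\nabla u$ in $L^\infty$-based blocks and $\na^\perp\theta\simeq\nabla\theta$ in $L^q$-based blocks, again with the $\ell^1$ summation absorbing the frequency localization.

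I expect the main obstacle to be bookkeeping the constant's independence of $q$ uniformly up to the endpoint $q=\infty$, and the mild subtlety that for $q\in\{2,\infty\}$ one cannot literally multiply the $\om$-equation by $|\Delta_j\om|^{q-2}\Delta_j\om$ in $L^q$; the cleanest route is to note that each $\Delta_j\om$ solves a linear transport–damping equation whose $L^q$-norm is non-increasing under the flow of the divergence-free field $u$ (an $L^q$ maximum-principle / characteristics argument valid for all $q\in[1,\infty]$), so Duhamel against the forcing and commutator gives the differential inequality for $\|\Delta_j\om\|_{L^q}$ with no $q$-dependent constant. Everything else — the equivalence $\|\Delta_j\nabla u\|_{L^q}\simeq\|\Delta_j\om\|_{L^q}$, the Bernstein inequalities, the paraproduct bounds — carries a constant depending only on the fixed number of overlapping dyadic annuli and on the $L^q$-boundedness of the relevant zeroth-order multipliers, all of which can be taken uniform in $q\in[2,\infty]$. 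Collecting these estimates and summing in $j$ yields \eqref{apin}.
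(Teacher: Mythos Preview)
Your approach is sound and closely parallels the paper's: both apply $\Delta_j$ to the equations, perform $L^q$ energy estimates on each dyadic block, decompose the nonlinearity via Bony paraproducts, bound a commutator of the form $[\Delta_j, S_{k-1}u\cdot\nabla]$ by $\|\nabla u\|_{L^\infty}$ times the localized $L^q$ norm, and sum in $j$ using Young's inequality for the discrete convolution. The main organizational difference is that you work with the vorticity formulation $(\omega,\nabla^\perp\theta)$ while the paper works directly with $(\nabla u,\nabla\theta)$; your route eliminates the pressure entirely, whereas the paper keeps the pressure term and handles it by writing $-\Delta p = \partial_l u_m\,\partial_m u_l - \partial_2\theta$ and invoking boundedness of the Riesz transforms $\nabla\nabla(-\Delta)^{-1}$ on each localized block (valid on $L^q$ for all $q\in[1,\infty]$ precisely because $\Delta_j$ localizes away from $\xi=0$). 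Your remark about the endpoint $q=\infty$ and the Duhamel/characteristics workaround is well taken; the paper simply writes the multiplier argument for general $q$ without singling out the endpoint.

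One caveat: the block-wise equivalence $\|\Delta_j\nabla u\|_{L^q}\simeq\|\Delta_j\omega\|_{L^q}$ does \emph{not} by itself convert the differential inequality
\[
\ddt\|\omega\|_{\mathring{B}^0_{q,1}} + \nu\|\omega\|_{\mathring{B}^0_{q,1}} \le C_0\|\nabla u\|_{\mathring{B}^0_{\infty,1}}\|\omega\|_{\mathring{B}^0_{q,1}} + C_0\|\nabla\theta\|_{\mathring{B}^0_{q,1}}
\]
into the literal first line of \eqref{apin}: equivalent norms need not have comparably bounded time derivatives, since the Riesz-type map $\omega\mapsto\nabla u$ does not commute with the transport operator $u\cdot\nabla$. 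What you actually obtain is the inequality above with $\|\omega\|_{\mathring{B}^0_{q,1}}$ on the left, which is an equivalent norm to $\|\nabla u\|_{\mathring{B}^0_{q,1}}$ (with constants independent of $q$) and is entirely sufficient for the bootstrap in Theorem~\ref{main}. If you want \eqref{apin} exactly as stated, you must run the energy estimate on $\Delta_j\nabla u$ itself, which forces you to confront the pressure term as the paper does; the extra work is minor and handled by the same paraproduct machinery you already describe.
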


\begin{proof}[Proof of Proposition \ref{globalbd}]
Let $2\le q\le \infty$ be arbitrarily fixed. We now derive the differential
inequalities for $\|\na \theta\|_{\mathring{B}^0_{q,1}}$
and $\|\na u\|_{\mathring{B}^0_{q,1}}$. Let $j$ be an integer. Applying $\Delta_j$ to the equation of
$\nabla \theta$ yields
$$
\partial_t\Delta_j \nabla\theta + \Delta_j\nabla((u\cdot \nabla)\theta)  +  \lambda\Delta_j\nabla \theta =0.
$$
Multiplying by $\Delta_j\nabla\theta|\Delta_j\nabla\theta|^{q-2}$ and integrating in space, we have
\begin{eqnarray}
\frac1q \frac{d}{dt} \|\Delta_j\nabla\theta\|_{L^q}^q + \lambda \|\Delta_j\nabla\theta\|_{L^q}^q =K_1 + K_2, \label{k1k2}
\end{eqnarray}
where
\begin{eqnarray*}
K_1 &=& -\int \Delta_j\nabla\theta|\Delta_j\nabla\theta|^{q-2}\cdot\, \Delta_j(\nabla u\cdot \nabla\theta), \\
K_2 &=& -\int \Delta_j\nabla\theta|\Delta_j\nabla\theta|^{q-2}\cdot\, \Delta_j(u\cdot\nabla\nabla \theta).
\end{eqnarray*}
To bound $K_1$, we use the notion of paraproducts to write $K_1$ into three terms,
$$
K_1 = K_{11} + K_{12} + K_{13},
$$
where
\begin{eqnarray*}
K_{11} &=& -\sum_{|k-j|\le 2}\int \Delta_j\nabla\theta|\Delta_j\nabla\theta|^{q-2}\cdot\,\Delta_j(S_{k-1}\nabla u\cdot \Delta_k \na\theta),\\
K_{12} &=& -\sum_{|k-j|\le 2}\int \Delta_j\nabla\theta|\Delta_j\nabla\theta|^{q-2}\cdot\,\Delta_j(\Delta_k\nabla u\cdot S_{k-1}\na \theta),\\
K_{13} &=& -\sum_{k\ge j-1} \int \Delta_j\nabla\theta|\Delta_j\nabla\theta|^{q-2}\cdot\,\Delta_j(\Delta_k\nabla u\cdot\widetilde{\Delta}_k \na\theta)
\end{eqnarray*}
with $\widetilde{\Delta}_k =\Delta_{k-1} + \Delta_k + \Delta_{k+1}$. By H\"{o}lder's inequality,
$$
|K_{11}| \le C\,\|\Delta_j\nabla\theta\|_{L^q}^{q-1}\, \|S_{j-1}\nabla u\|_{L^\infty}\, \|\Delta_j\nabla\theta\|_{L^q}
\le C\, \|\nabla u\|_{L^\infty}\, \|\Delta_j\nabla\theta\|_{L^q}^q,
$$
where $C$ is a constant independent of $q$. Here we have applied the simple fact that,
for fixed $j$, the summation in $K_{11}$ is for a finite number of $k$'s satisfying
$|k-j|\le 2$ and the estimate for the term with the index $k$ is only a constant
multiple of the bound for the term
with the index $j$. By H\"{o}lder's inequality,
$$
|K_{12}| \le C\,\|\Delta_j\nabla\theta\|_{L^q}^{q-1}\,\|\Delta_j \nabla u\|_{L^\infty}\,
\sum_{m\leq j-1} \|\Delta_m \nabla\theta\|_{L^q},
$$
where again $C$ is independent of $q$. Thanks to $\na \cdot u =0$ and by H\"{o}lder's inequality
and Bernstein's inequality,
\begin{eqnarray*}
|K_{13}| &\le& C\,\|\Delta_j\nabla\theta\|_{L^q}^{q-1}\,\sum_{k\ge j-1} 2^j \|\Delta_k \nabla u\|_{L^\infty}\,\|\Delta_k\theta\|_{L^q} \\
&\le& C\,\|\Delta_j\nabla\theta\|_{L^q}^{q-1}\,\sum_{k\ge j-1} 2^{j-k}\,\|\Delta_k \nabla u\|_{L^\infty}\,\|\Delta_k\nabla \theta\|_{L^q}.
\end{eqnarray*}
We now turn to $K_2$. We decompose it into five terms via the notion of paraproducts,
$$
K_2 =K_{21} + K_{22} + K_{23} + K_{24} + K_{25},
$$
where
\begin{eqnarray*}
K_{21} &=& -\sum_{|k-j|\le 2}\int \Delta_j\nabla\theta|\Delta_j\nabla\theta|^{q-2}\cdot\,[\Delta_j, S_{k-1}u\cdot\nabla]\Delta_k\na \theta,\\
K_{22} &=& -\sum_{|k-j|\le 2}\int \Delta_j\nabla\theta|\Delta_j\nabla\theta|^{q-2}\cdot\, (S_{k-1}u -S_j u)\cdot\nabla\Delta_j\Delta_k \na\theta,\\
K_{23} &=& -\int \Delta_j\nabla\theta|\Delta_j\nabla\theta|^{q-2}\cdot\,S_j u\cdot\nabla \Delta_j \na\theta,\\
K_{24} &=& -\sum_{|k-j|\le 2}\int \Delta_j\nabla\theta|\Delta_j\nabla\theta|^{q-2}\cdot\,\Delta_j (\Delta_k u\cdot\nabla S_{k-1}\nabla \theta),\\
K_{25} &=& -\sum_{k\ge j-1} \int \Delta_j\nabla\theta|\Delta_j\nabla\theta|^{q-2}\cdot\,\Delta_j(\Delta_k u\cdot\na \widetilde{\Delta}_k \na\theta).
\end{eqnarray*}
Thanks to the divergence-free condition, $\na \cdot u=0$, we have $K_{23}=0$. By H\"{o}lder's inequality
and a standard commutator estimate (see, e.g., \cite[p.110]{BCD}),
$$
|K_{21}| \le C\, \|\Delta_j\nabla\theta\|_{L^q}^{q-1}\, \|\na S_{j-1} u\|_{L^\infty}\, \|\Delta_j\na\theta\|_{L^q}\le C\, \|\nabla u\|_{L^\infty}\, \|\Delta_j\nabla\theta\|_{L^q}^q,
$$
where $C$ is a constant independent of $q$. It is easy to see from H\"{o}ler's inequality and Bernstein's inequality that
$$
|K_{22}| \le C\, \|\Delta_j\nabla\theta\|_{L^q}^{q-1}\, \|\Delta_j\na u\|_{L^\infty} \, \|\Delta_j \na\theta\|_{L^q}.
$$
Again, by H\"{o}ler's inequality and Bernstein's inequality,
$$
|K_{24}| \le C\, \|\Delta_j\nabla\theta\|_{L^q}^{q-1}\,\|\Delta_j\na u\|_{L^\infty} \,
\sum_{m\le j-1}\|\Delta_m\na \theta\|_{L^q}.
$$
Due to the divergence-free condition,
$$
|K_{25}| \le C\, \|\Delta_j\nabla\theta\|_{L^q}^{q-1}\, \sum_{k\ge j-1} 2^{j-k} \|\na \Delta_k u\|_{L^\infty} \, \|\na \Delta_k\theta\|_{L^q}.
$$
Inserting the estimates for $K_1$ and $K_2$ above in (\ref{k1k2}), we obtain
\begin{eqnarray*}
&& \frac{d}{dt} \|\Delta_j\nabla\theta\|_{L^q} + \lambda \|\Delta_j\nabla\theta\|_{L^q}
\le C\, \|\nabla u\|_{L^\infty}\, \|\Delta_j\nabla\theta\|_{L^q}\\
&& \qquad\quad + \, C\, \|\Delta_j \nabla u\|_{L^\infty}\, \sum_{m\leq j-1} \|\Delta_m \nabla\theta\|_{L^q}
+  C\, \sum_{k\ge j-1} 2^{j-k} \|\na \Delta_k u\|_{L^\infty} \, \|\na \Delta_k \theta\|_{L^q}.
\end{eqnarray*}
Summing over all integer $j$ and applying Young's inequality for series convolution, we obtain
\begin{eqnarray*}
\frac{d}{dt} \|\nabla\theta\|_{\mathring{B}^0_{q,1}} + \lambda \|\nabla\theta\|_{\mathring{B}^0_{q,1}}
&\le& C\,\|\nabla u\|_{L^\infty}\,\|\nabla\theta\|_{\mathring{B}^0_{q,1}}
+ C\,\|\nabla u\|_{\mathring{B}^0_{\infty,1}}\,\|\nabla\theta\|_{\mathring{B}^0_{q,1}}.
\end{eqnarray*}
Invoking the simple fact
$$
\|\nabla u\|_{L^\infty} \le \|\nabla u\|_{\mathring{B}^0_{\infty,1}},
$$
we obtain
\begin{eqnarray}
\frac{d}{dt} \|\nabla\theta\|_{\mathring{B}^0_{q,1}} + \lambda \|\nabla\theta\|_{\mathring{B}^0_{q,1}}
\le C\,\|\nabla u\|_{\mathring{B}^0_{\infty,1}}\,\|\nabla\theta\|_{\mathring{B}^0_{q,1}} \label{thbesov}
\end{eqnarray}
for a pure constant $C$ independent of $q$.

\vskip .1in
We now derive an differential inequality for $\|\na u\|_{\mathring{B}^0_{q,1}}$. The process is similar to
that for $\|\na \theta\|_{\mathring{B}^0_{q,1}}$, but we need to deal with the pressure term. Applying $\Delta_j$
to the equation of $\na u$ yields
$$
\partial_t\Delta_j \nabla u + \Delta_j\nabla((u\cdot \nabla)u)  +  \nu \Delta_j\nabla u
=-\Delta_j \na\na p + \Delta_j \nabla (\theta\mathbf{e}_2).
$$
Multiplying by $\Delta_j\nabla u|\Delta_j\nabla u|^{q-2}$ and integrating in space, we have
\begin{eqnarray}
\frac1q \frac{d}{dt} \|\Delta_j\nabla u\|_{L^q}^q + \nu \|\Delta_j\nabla u\|_{L^q}^q =L_1 + L_2 + L_3+ L_4, \label{L1L2L3}
\end{eqnarray}
where
\begin{eqnarray*}
L_1 &=& -\int \Delta_j\nabla u|\Delta_j\nabla u|^{q-2}\cdot\, \Delta_j(\nabla u\cdot \nabla u), \\
L_2 &=& -\int \Delta_j\nabla u|\Delta_j\nabla u|^{q-2}\cdot\, \Delta_j(u\cdot\nabla\nabla u),\\
L_3 &=& -\int \Delta_j\nabla u|\Delta_j\nabla u|^{q-2}\cdot\, \Delta_j \na\na p,\\
L_4 &=& -\int \Delta_j\nabla u|\Delta_j\nabla u|^{q-2}\cdot\,\Delta_j \nabla (\theta\mathbf{e}_2).
\end{eqnarray*}
$L_1$ and $L_2$ can be estimated in a similar fashion as $K_1$ and $K_2$, respectively.
They obey the following bounds,
\begin{eqnarray*}
|L_1|, |L_2| &\le& C\, \|\nabla u\|_{L^\infty}\, \|\Delta_j\nabla u\|_{L^q}^q + C\,\|\Delta_j\nabla u\|_{L^q}^{q-1}\,\|\Delta_j \nabla u\|_{L^\infty}\,
\|\nabla u\|_{\mathring{B}^0_{q,1}}\\
&& + \,C\,\|\Delta_j\nabla u\|_{L^q}^{q-1}\,\sum_{k\ge j-1} 2^{j-k}\,\|\Delta_k \nabla u\|_{L^\infty}\,\|\Delta_k\nabla u\|_{L^q}.
\end{eqnarray*}
To bound $L_3$, we first apply the divergence-free condition to obtain
$$
-\Delta p = \partial_l u_m \,\partial_m u_l - \partial_2 \theta,
$$
where the Einstein summation convention is invoked. Therefore,
$$
\Delta_j \na\na p = \na\na (-\Delta)^{-1} \Delta_j(\partial_l u_m \,\partial_m u_l)
-\na\na (-\Delta)^{-1} \Delta_j \partial_2 \theta.
$$
Since $\na\na (-\Delta)^{-1}$ are two Riesz transforms, $\na\na (-\Delta)^{-1} \Delta_j(\partial_l u_m \,\partial_m u_l)$
admits the same bound as $\Delta_j(\partial_l u_m \,\partial_m u_l)$ in any $L^q$ with $q\in [1,\infty]$. The reason
for the boundedness in the case of $p=1$ or $p=\infty$ is that $\Delta_j$ is a homogeneous localization operator. Therefore,
$L_3$ can be handled similarly as $L_1$ and
\begin{eqnarray*}
|L_3| &\le& C\, \|\nabla u\|_{L^\infty}\, \|\Delta_j\nabla u\|_{L^q}^q + C\,\|\Delta_j\nabla u\|_{L^q}^{q-1}\,\|\Delta_j \nabla u\|_{L^\infty}\,
\|\nabla u\|_{\mathring{B}^0_{q,1}}\\
&& + \,C\,\|\Delta_j\nabla u\|_{L^q}^{q-1}\,\sum_{k\ge j-1} 2^{j-k}\,\|\Delta_k \nabla u\|_{L^\infty}\,\|\Delta_k\nabla u\|_{L^q} + \|\Delta_j\nabla u\|_{L^q}^{q-1}\,
\|\Delta_j\nabla \theta\|_{L^q}.
\end{eqnarray*}
Applying H\"{o}lder's inequality to $L_4$ yields
$$
|L_4| \le C\, \|\Delta_j\nabla u\|_{L^q}^{q-1}\,\|\Delta_j\nabla \theta\|_{L^q}.
$$
Inserting the estimates for $|L_1|, |L_2|$, $|L_3|$ and $|L_4|$ in (\ref{L1L2L3}), we find
\begin{eqnarray*}
&& \frac{d}{dt}\|\Delta_j\nabla u\|_{L^q} + \nu \|\Delta_j\nabla u\|_{L^q}
\le C\, \|\nabla u\|_{L^\infty}\, \|\Delta_j\nabla u\|_{L^q} + C\,\|\Delta_j \nabla u\|_{L^\infty}\, \|\nabla u\|_{\mathring{B}^0_{q,1}}\\
&& \qquad\qquad + \,C\, \sum_{k\ge j-1} 2^{j-k}\,\|\Delta_k \nabla u\|_{L^\infty}\,\|\Delta_k\nabla u\|_{L^q} + C\,\|\Delta_j\nabla \theta\|_{L^q}.
\end{eqnarray*}
Summing over all integer $j$ and by Young's inequality for series convolution, we have
\begin{eqnarray}
\frac{d}{dt} \|\nabla u\|_{\mathring{B}^0_{q,1}} + \nu \|\nabla u\|_{\mathring{B}^0_{q,1}}
&\le& C_0\,\|\nabla u\|_{\mathring{B}^0_{\infty,1}}\,\|\nabla u\|_{\mathring{B}^0_{q,1}}
+ C_0\,\|\nabla \theta\|_{\mathring{B}^0_{q,1}} \label{ues}
\end{eqnarray}
for a constant $C_0$ independent of $q$, $\nu$ and $\lambda$. (\ref{thbesov}) and (\ref{ues}) yield (\ref{apin}).
This completes the proof of Proposition \ref{globalbd}.
\end{proof}

\vskip .1in
To prove Theorem \ref{main}, we need a few notation and list several facts.
For $N>0$, we denote by $J_N$ the Fourier multiplier operator defined by
$$
\widehat{J_N f}(\xi) = \chi_{B(0,N)}(\xi) \, \widehat{f}(\xi),
$$
where $B(0,N)$ denotes the closed ball centered at the origin with radius $N$ and $\chi_{B(0,N)}$
the characteristic function on $B(0,N)$. Let $\mathbb{P}$ denote the
Leray projection onto divergence-free vector fields. More precise definition of $\mathbb{P}$
can be found in the book of Majda and Bertozzi \cite[p.35, p.99]{MB}. The following simple properties
of $J_N$ and $\mathbb{P}$ will be used.

\begin{lemma} \label{jnp}
Let $J_N$ with $N>0$ and $\mathbb{P}$ denote the aforementioned operators.
Then the following properties hold:
\begin{enumerate}
\item For any $s\ge 0$,
$$
\|J_N f\|_{H^s}\leq \|f\|_{H^s}, \qquad \|\mathbb{P} f\|_{H^s}\leq \|f\|_{H^s};
$$
\item Assume that $f \in L^2$ and  $s\ge 0$, then $J_N f \in H^s$ and
$$
\|J_Nf\|_{H^{s}}\leq C\,N^{s}\|f\|_{L^2};
$$
\item Assume that $f\in H^s$, then
$$
\|J_N f-f\|_{H^{s-1}}\leq\frac{C}{N}\|f\|_{H^s}, \quad
\|J_N f-f\|_{H^s}\,\to 0 \quad\mbox{as $N\rightarrow\infty$}.
$$
\end{enumerate}
\end{lemma}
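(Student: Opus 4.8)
The plan is to reduce all three assertions to the Fourier–side description of the Sobolev norm,
\[
\|f\|_{H^s}^2=\int_{\mathbb{R}^2}(1+|\xi|^2)^s\,|\widehat{f}(\xi)|^2\,d\xi ,
\]
after which each item becomes an elementary pointwise estimate on the relevant multiplier symbol, followed by integration against the weight $(1+|\xi|^2)^s$ and an application of the Plancherel theorem.

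For item (1), I would observe that $\widehat{J_N f}(\xi)=\chi_{B(0,N)}(\xi)\,\widehat{f}(\xi)$ satisfies $|\widehat{J_N f}(\xi)|\le|\widehat{f}(\xi)|$ for every $\xi$, so multiplying by $(1+|\xi|^2)^s$ and integrating gives $\|J_N f\|_{H^s}\le\|f\|_{H^s}$. For $\mathbb{P}$, recall that its symbol at a fixed $\xi\ne 0$ is the orthogonal projection of $\mathbb{C}^2$ onto $\xi^{\perp}$, explicitly $\widehat{\mathbb{P}f}(\xi)=\widehat{f}(\xi)-|\xi|^{-2}\,\xi\,(\xi\cdot\widehat{f}(\xi))$; being an orthogonal projection it is a contraction on $\mathbb{C}^2$, so $|\widehat{\mathbb{P}f}(\xi)|\le|\widehat{f}(\xi)|$ pointwise, and the same weighted integration yields $\|\mathbb{P}f\|_{H^s}\le\|f\|_{H^s}$. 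Since we stay in the $L^2$-based scale, no singular-integral (Calder\'on--Zygmund) bounds on the Riesz transforms are needed here.

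For item (2), on $\mathrm{supp}\,\chi_{B(0,N)}$ one has $(1+|\xi|^2)^s\le(1+N^2)^s$, hence
\[
\|J_N f\|_{H^s}^2=\int_{|\xi|\le N}(1+|\xi|^2)^s\,|\widehat{f}(\xi)|^2\,d\xi\le(1+N^2)^s\,\|f\|_{L^2}^2 ,
\]
and $(1+N^2)^{s/2}\le C\,N^s$ absorbs the weight into the claimed constant. For item (3), write $\widehat{J_N f-f}(\xi)=-\,\chi_{\{|\xi|>N\}}(\xi)\,\widehat{f}(\xi)$; on $\{|\xi|>N\}$ we have $(1+|\xi|^2)^{s-1}\le N^{-2}(1+|\xi|^2)^s$, so
\[
\|J_N f-f\|_{H^{s-1}}^2\le\frac{1}{N^2}\int_{|\xi|>N}(1+|\xi|^2)^s\,|\widehat{f}(\xi)|^2\,d\xi\le\frac{1}{N^2}\,\|f\|_{H^s}^2 .
\]
Finally, $\|J_N f-f\|_{H^s}^2=\int_{|\xi|>N}(1+|\xi|^2)^s|\widehat{f}(\xi)|^2\,d\xi\to 0$ as $N\to\infty$, because $f\in H^s$ makes $(1+|\xi|^2)^s|\widehat{f}(\xi)|^2$ an element of $L^1(\mathbb{R}^2)$, whose tail integral vanishes (dominated convergence). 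There is no genuinely hard step in this lemma; the only points worth a brief remark are the fibrewise contraction property of $\mathbb{P}$ and the fact that $J_N$ is a Fourier multiplier whose symbol takes values in $[0,1]$.
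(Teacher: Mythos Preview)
Your proof is correct; the paper itself does not prove this lemma at all, simply stating it as a list of ``simple properties'' and moving directly to the proof of Theorem~\ref{main}. Your Plancherel/Fourier-multiplier argument is exactly the standard verification one would supply, and the only minor caveat is that the bound $(1+N^2)^{s/2}\le C\,N^s$ in item~(2) tacitly assumes $N\ge 1$, which is harmless here since in the paper $N$ is a large integer.
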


\vskip .2in
We are now ready to prove Theorem \ref{main}.
\begin{proof}[Proof of Theorem \ref{main}] The proof of this theorem is long.
For the sake of clarity, we divided it into five major steps.

\vskip .1in
{\bf Step 1. Construction of approximate solutions}. Let $N>0$ be an integer. In this step, we construct
a smooth global solution $(u^N, \theta^N)$ satisfying
\begin{equation} \label{app}
\begin{cases}
\partial_t u^N + \mathbb{P} J_N(\mathbb{P} J_N u^N\cdot\nabla \mathbb{P} J_N u^N)
  + \nu \mathbb{P} J_Nu^N = \mathbb{P} J_N(\theta^N \mathbf{e}_2), \\
\partial_t \theta^N + J_N(\mathbb{P}J_N u^N\cdot\nabla J_N \theta^N)
  + \lambda J_N \theta^N =0,\\
u^N(x,0) = J_N u_0 (x), \qquad \theta^N(x,0) = J_N \theta_0 (x).
\end{cases}
\end{equation}
It follows from Picard's Theorem (\cite[p.100]{MB}) that, for fixed $N>0$,
(\ref{app}) has a unique global smooth solution $(u^N, \theta^N)$ satisfying,
for any $T>0$,
$$
(u^N, \theta^N) \in C([0,T]; H^s(\mathbb{R}^2))
$$
for any $s>0$. In particular,
$$
\nabla u^N, \,\,\nabla \theta^N \in C([0,T]; \mathring{B}^0_{\infty,1}(\mathbb{R}^2)).
$$
It is easily checked that, if $(u^N, \theta^N)$ solves (\ref{app}),
then $(J_N u^N, J_N\theta^N)$ and $(\mathbb{P}u^N, \theta^N)$ also solve
(\ref{app}). By the uniqueness,
$$
J_N u^N =\mathbb{P}u^N =u^N, \qquad J_N \theta^N =\theta^N.
$$
$\mathbb{P}u^N =u^N$ implies $\nabla \cdot u^N=0$. Consequently (\ref{app})
is reduced to
\begin{equation} \label{appeq}
\begin{cases}
\partial_t u^N + \mathbb{P} J_N(u^N\cdot\nabla  u^N)
  + \nu u^N = \mathbb{P} J_N(\theta^N \mathbf{e}_2), \\
\partial_t \theta^N + J_N(u^N\cdot\nabla\theta^N)
  + \lambda\theta^N =0,\\
\nabla \cdot u^N=0.
\end{cases}
\end{equation}

\vskip .1in
{\bf Step 2. Uniform global bounds}.  In this step, we establish uniform global bounds for $(u^N, \theta^N)$.
Simple energy estimates combined with $\nabla\cdot u^N=0$ yields
\begin{eqnarray}
&& \|\theta^N(t)\|_{L^2} = \|J_N \theta_0\|_{L^2}\,e^{-\lambda t}
\le \|\theta_0\|_{L^2}\,e^{-\lambda t}, \nonumber\\
&&\|u^N(t)\|_{L^2} \le \|u_0\|_{L^2} \,e^{-\nu t} + \frac1{\nu} \|\theta_0\|_{L^2}. \label{L2global}
\end{eqnarray}
Furthermore, a similar procedure as in the proof of Proposition \ref{globalbd} implies
\begin{eqnarray}
&& \frac{d}{dt} \|\nabla u^N\|_{\mathring{B}^0_{\infty,1}} + \nu \|\nabla u^N\|_{\mathring{B}^0_{\infty,1}}
\le C_0\,\|\nabla u^N\|^2_{\mathring{B}^0_{\infty,1}}
+ C_0\,\|\nabla \theta^N\|_{\mathring{B}^0_{\infty,1}}, \label{diff1}\\
&& \frac{d}{dt} \|\nabla\theta^N\|_{\mathring{B}^0_{\infty,1}} + \lambda \|\nabla\theta^N\|_{\mathring{B}^0_{\infty,1}}
\le C_0\,\|\nabla u^N\|_{\mathring{B}^0_{\infty,1}}\,\|\nabla\theta^N\|_{\mathring{B}^0_{\infty,1}},
\label{diff2}
\end{eqnarray}
where $C_0$ is a constant independent of $N$, $\nu$ and $\lambda$. We claim that
these differential inequalities yield the global bounds, for large $N$ and any $t>0$,
\begin{equation} \label{locabb}
\|\nabla u^N(t)\|_{\mathring{B}^0_{\infty,1}} < A_0, \qquad \|\nabla\theta^N\|_{\mathring{B}^0_{\infty,1}} < B_0.
\end{equation}
To see this, we first choose a large $N$ such that
\begin{equation} \label{initials}
\|\nabla J_N u_0\|_{\mathring{B}^0_{\infty,1}} <A_0, \qquad \|\nabla J_N \theta_0\|_{\mathring{B}^0_{\infty,1}} < B_0.
\end{equation}
(\ref{initials}) is realized as follows. Since $u_0 \in \mathring{B}^0_{\infty,1}$ satisfies (\ref{smallness}), we can
choose $N$ sufficiently large such that
$$
\|\nabla u_0\|_{\mathring{B}^0_{\infty,1}} + \|\Delta_{j_0+1} J_N \nabla u_0\|_{L^\infty} + \|\Delta_{j_0+2} J_N \nabla u_0\|_{L^\infty} < A_0,
$$
where $j_0$ is an integer such that $2^{j_0+1} \le N <2^{j_0+2}$.
By the definition of $\mathring{B}^0_{\infty,1}$ and $J_N$,
\begin{eqnarray*}
\|\nabla J_N u_0\|_{\mathring{B}^0_{\infty,1}} &\le& \sum_{j=-\infty}^{j_0}
\|\Delta_j \nabla J_N u_0\|_{L^\infty} + \|\Delta_{j_0+1} \nabla J_N u_0\|_{L^\infty} + \|\Delta_{j_0+2} \nabla J_N u_0\|_{L^\infty} \\
&=& \sum_{j=-\infty}^{j_0}  \|\Delta_j\nabla  u_0\|_{L^\infty} + \|\Delta_{j_0+1}\nabla  J_N u_0\|_{L^\infty} + \|\Delta_{j_0+2}\nabla  J_N u_0\|_{L^\infty}\\
&<& \|\nabla u_0\|_{\mathring{B}^0_{\infty,1}} + \|\Delta_{j_0+1}\nabla  J_N u_0\|_{L^\infty} + \|\Delta_{j_0+2}\nabla  J_N u_0\|_{L^\infty}\\
&<& A_0.
\end{eqnarray*}
Here we have used the facts that $\Delta_j J_N =\Delta_j$ for $j\le j_0$ and $\Delta_{j_0+3} J_N u_0 =0$ due to
\begin{eqnarray*}
&&\widehat{\Delta_j J_N u_0}=\widehat{\Phi}_{j}(\xi)\, \chi_{B(0,N)}(\xi) \, \widehat{u}_0(\xi) =\widehat{\Phi}_{j}(\xi)\,\widehat{u}_0(\xi)
=\widehat{\Delta_j u_0},\\
&&\widehat{\Delta_{j_0+3} J_N u_0}(\xi) = \widehat{\Phi}_{j_0+3}(\xi)\, \chi_{B(0,N)}(\xi) \, \widehat{u}_0(\xi) \equiv 0.
\end{eqnarray*}
More details $\Delta_j$ and $\widehat{\Phi}_{j}$ can be found in Appendix \ref{Besov}. Similarly, for sufficiently large $N$,
$$
\|\nabla J_N \theta_0\|_{\mathring{B}^0_{\infty,1}} < B_0.
$$
Now suppose (\ref{locabb}) is not true and $T^*>0$ is the first time such that
at least one of the inequalities in (\ref{locabb}) is violated. That is,
\begin{equation}\label{ss0}
\|\nabla u^N(T^*)\|_{\mathring{B}^0_{\infty,1}} =A_0\quad \mbox{or}\quad
\|\nabla \theta^N(T^*)\|_{\mathring{B}^0_{\infty,1}} =B_0
\end{equation}
and, for $t\in (0,T^*)$,
\begin{equation}\label{ssl}
\|\nabla u^N(t)\|_{\mathring{B}^0_{\infty,1}} < A_0\quad \mbox{and}\quad
\|\nabla \theta^N(t)\|_{\mathring{B}^0_{\infty,1}} <B_0.
\end{equation}
A contradiction then easily follows from (\ref{diff1}) and (\ref{diff2}). In fact,
(\ref{diff2}), (\ref{ssl}) and the definition of $A_0$ in (\ref{smallness})  imply
\begin{eqnarray}
\|\nabla \theta^N(T^*)\|_{\mathring{B}^0_{\infty,1}} &\le& \|\nabla J_N\theta_0\|_{\mathring{B}^0_{\infty,1}}
\exp\left(-\int_0^{T^*} (\lambda-C_0\|\nabla u^N(t)\|_{\mathring{B}^0_{\infty,1}})\,dt \right) \nonumber\\
&\le&  \|\nabla J_N\theta_0\|_{\mathring{B}^0_{\infty,1}}  <B_0. \label{juu1}
\end{eqnarray}
By (\ref{diff1}), (\ref{ssl}) and the definitions of $A_0$ and $B_0$ in (\ref{smallness}),
\begin{eqnarray}
&& \|\nabla u^N(T^*)\|_{\mathring{B}^0_{\infty,1}} \le \|\nabla J_N u_0\|_{\mathring{B}^0_{\infty,1}} \exp\left(-\int_0^{T^*} (\nu-C_0\|\nabla u^N(t)\|_{\mathring{B}^0_{\infty,1}})\,dt \right)\nonumber\\
&& \qquad  + \int_0^{T^*}\exp\left(- \int_t^{T^*} (\nu-C_0\|\nabla u^N(s)\|_{\mathring{B}^0_{\infty,1}})\,ds\right) \|\nabla \theta^N(t)\|_{\mathring{B}^0_{\infty,1}}\,dt\nonumber\\
&& \qquad\le \|\nabla J_N u_0\|_{\mathring{B}^0_{\infty,1}}
\exp\left(-\nu T^*/2\right)
+ \frac{2}{\nu}\left(1-\exp\left(-\nu T^*/2\right)\right) \|\nabla J_N\theta_0\|_{\mathring{B}^0_{\infty,1}} \nonumber\\
&& \qquad < A_0 \exp\left(-\nu T^*/2\right) + A_0 \left(1-\exp\left(-\nu T^*/2\right)\right) =A_0. \label{juu2}
\end{eqnarray}
Clearly (\ref{juu1}) and (\ref{juu2}) contradict with (\ref{ss0}).

\vskip .1in
{\bf Step 3. Extraction of a strongly convergent subsequence}. We show here that we can
extract a subsequence of $(u^N, \theta^N)$, still
denoted by $(u^N, \theta^N)$, such that
\begin{equation} \label{l2con}
\|(u^N, \theta^N) -(u, \theta)\|_{L^2} \to 0, \qquad \mbox{as $N\to \infty$},
\end{equation}
where $(u, \theta) \in L^2$. This is achieved by showing that
$(u^N, \theta^N)$ is a Cauchy sequence in
$L^2$, namely
\begin{equation} \label{conv}
\|(u^N(t), \theta^N(t)) -(u^{N'}(t), \theta^{N'}(t))\|_{L^2} \to 0
\end{equation}
as $N$ and $N'$ tend to infinity. With the global bounds (\ref{L2global}) and (\ref{locabb})
at our disposal, it is not hard to verify (\ref{conv}) by performing
energy estimates on (\ref{appeq}). We omit the details.

\vskip .1in
{\bf Step 4. Verifying that $(u, \theta)$ solves (\ref{dbouss})}. Now we show
that $(u, \theta)$ solves the 2D Boussinesq equations (\ref{dbouss})
in the sense of $\mathring{H}_*^{-\sigma}$
for any $\sigma \in (0,1)$, where $\mathring{H}_*^{-\sigma}$ denotes a subspace of $\mathring{H}^{-\sigma}$ ,
$$
\mathring{H}_*^{-\sigma} = \left\{f\in \mathring{H}^{-\sigma}|\,\,\|f\|_{\mathring{H}_*^{-\sigma}} <\infty \right\}
$$
with
$$
\|f\|^2_{\mathring{H}_*^{-\sigma}} =\sum_{j=-\infty}^0 \|\Delta_j f\|^2_{L^2} + \sum_{j=1}^\infty 2^{-2\sigma j} \|\Delta_j f\|_{L^2}^2.
$$
We take the limit of (\ref{appeq}) as $N\to \infty$. Trivially
\begin{equation} \label{lincon}
\nu u^N \to \nu u, \qquad \mathbb{P} J_N(\theta^N \mathbf{e}_2) \to \mathbb{P} (\theta \mathbf{e}_2)
\qquad \mbox{in\,\, $\mathring{H}_*^{-\sigma}$}.
\end{equation}
Our main effort is devoted to showing the convergence of the nonlinear term. We consider the
difference $\|\mathbb{P} J_N(u^N\cdot\nabla  u^N) -\mathbb{P} (u\cdot\nabla u)\|_{\mathring{H}_*^{-\sigma}}$.
By Lemma \ref{jnp},
\begin{eqnarray}
&& \|\mathbb{P} J_N(u^N\cdot\nabla  u^N) -\mathbb{P} (u\cdot\nabla u)\|_{\mathring{H}_*^{-\sigma}} \nonumber\\
&& \qquad \le \|\mathbb{P} J_N(u^N\cdot\nabla  u^N -u\cdot\nabla u)\|_{\mathring{H}_*^{-\sigma}}
+ \|\mathbb{P} (J_N-1) (u\cdot\nabla u)\|_{\mathring{H}_*^{-\sigma}} \nonumber \\
&& \qquad \le \|u^N\cdot\nabla  u^N -u\cdot\nabla u\|_{\mathring{H}_*^{-\sigma}}
+ \|\mathbb{P} (J_N-1) (u\cdot\nabla u)\|_{L^2}  \nonumber \\
&& \qquad \le \|u^N-u\|_{L^2}\,\|\nabla  u^N\|_{L^\infty}
+ \|u\cdot\nabla(u^N-u)\|_{\mathring{H}_*^{-\sigma}}  \nonumber\\
&& \qquad \quad + \|(J_N-1) (u\cdot\nabla u)\|_{L^2}. \label{diff0}
\end{eqnarray}
Due to the embedding $\mathring{B}^0_{\infty,1} \hookrightarrow L^\infty$ and the bound in (\ref{locabb}),
$$
\|u\cdot\nabla u\|_{L^2} \le \|u\|_{L^2} \,\|\nabla u\|_{L^\infty} \le \|u_0\|_{L^2}\,A_0.
$$
Therefore, (\ref{locabb}), (\ref{l2con}) and Lemma \ref{jnp} imply that, as $N\to \infty$,
$$
\|u^N-u\|_{L^2}\,\|\nabla  u^N\|_{L^\infty} \le A_0 \|u^N-u\|_{L^2}\,\to 0, \qquad
\|(J_N-1) (u\cdot\nabla u)\|_{L^2}\, \to\, 0.
$$
To show that $\|u\cdot\nabla(u^N-u)\|_{\mathring{H}_*^{-\sigma}}\,\to 0$ as $N\to \infty$, we write by the notion
of paraproduct, for any integer $j$,
$$
\Delta_j(u\cdot\nabla(u^N-u)) = M_1 + M_2 + M_3,
$$
where
\begin{eqnarray*}
M_1 &=& \sum_{|j-k|\le 2} \Delta_j (S_{k-1} u \cdot \nabla \Delta_k (u^N-u)), \\
M_2 &=& \sum_{|j-k|\le 2} \Delta_j (\Delta_k u \cdot \nabla S_{k-1}(u^N-u)), \\
M_3 &=& \sum_{k\ge j-1} \Delta_j (\Delta_k u \cdot \nabla \widetilde{\Delta}_k(u^N-u)).
\end{eqnarray*}
Letting $r=\frac2\sigma$ and $\frac1q+\frac1r=\frac12$ and applying H\"{o}lder's inequality, we have
\begin{eqnarray*}
\|M_1\|_{L^2} \le C\, \|S_{j-1} u\|_{L^{q}}\, \|\nabla \Delta_j (u^N-u)\|_{L^r}.
\end{eqnarray*}
By Bernstein's inequality and an interpolation inequality,
\begin{eqnarray*}
\|M_1\|_{L^2} &\le& C\, 2^j\,\|u\|_{L^{q}}\, \|\Delta_j (u^N-u)\|_{L^r}\\
 &\le& C\, 2^j\,\|u\|_{L^{q}}\, \|\Delta_j (u^N-u)\|^{1-\frac2r}_{L^\infty} \|\Delta_j (u^N-u)\|^{\frac2r}_{L^2}.
\end{eqnarray*}
Thanks to $q>2$, (\ref{locabb}) and (\ref{l2con}), we apply Bernstein's inequality to obtain
\begin{eqnarray*}
\|u\|_{L^q} \le \|u\|_{B^0_{q,2}} &=&\|\Delta_{-1} u\|_{L^2}
+ \left[\sum_{j\ge 0} \|\Delta_j u\|_{L^q}^2\right]^{\frac12} \\
&=&\|\Delta_{-1} u\|_{L^2} + \left[\sum_{j\ge 0} \|\Delta_j u\|^{2-\frac4q}_{L^\infty}
\|\Delta_j u\|^{\frac4q}_{L^2}\right]^{\frac12} \\
&=&\|\Delta_{-1} u\|_{L^2} + \left[\sum_{j\ge 0} \|\Delta_j u\|^{2}_{L^\infty} \right]^{1-\frac2q}
\left[\sum_{j\ge 0}\|\Delta_j u\|^2_{L^2}\right]^{\frac2q}\\
&\le& C\,\|u\|_{L^2} (1+\|u\|_{\mathring{B}^0_{\infty,1}}) < \infty.
\end{eqnarray*}
By $\nabla\cdot u=0$, Bernstein's inequality and  H\"{o}lder's inequality,
\begin{eqnarray*}
\|M_2\|_{L^2} &\le& C\, 2^j \|\Delta_j u\|_{L^\infty} \,\|S_{j-1}(u^N-u)\|_{L^2} \\
&\le& C\, 2^j \|\Delta_j u\|_{L^\infty}\,\|u^N-u\|_{L^2}
\end{eqnarray*}
and
\begin{eqnarray*}
\|M_3\|_{L^2} &\le& C\,\sum_{k\ge j-1} 2^{j-k} \|\nabla \Delta_k u\|_{L^\infty}\, \|\Delta_k (u^N-u)\|_{L^2}.
\end{eqnarray*}
Therefore, by combining the bounds above, we have
\begin{eqnarray}
&& \|u\cdot\nabla(u^N-u)\|^2_{{\mathring{H}_*^{-\sigma}}} \nonumber\\
&& \qquad = \sum_{j=-\infty}^0 \|\Delta_j(u\cdot\nabla(u^N-u))\|^2_{L^2}
+ \sum_{j=1}^\infty 2^{-2\sigma j}\|\Delta_j(u\cdot\nabla(u^N-u))\|^2_{L^2} \nonumber\\
&& \qquad \le C\,\|u^N-u\|_{L^2}^2\, \sum_{j=-\infty}^\infty 2^{2j} \|\Delta_j u\|^2_{L^\infty} \nonumber\\
&&\qquad \quad + \,C\, \sum_{j=-\infty}^\infty\left[\sum_{k\ge j-1} 2^{j-k} \|\nabla \Delta_k u\|_{L^\infty}\,
\|\Delta_k (u^N-u)\|_{L^2}\right]^2\nonumber\\
&&\qquad \quad + \,C\, \|u\|^2_{L^{q}}\,\sum_{j=-\infty}^\infty 2^{2(1-\sigma)j}
\|\Delta_j (u^N-u)\|^{2-\frac4r}_{L^\infty} \|\Delta_j (u^N-u)\|^{\frac4r}_{L^2}. \label{dd0}
\end{eqnarray}
We further estimate the terms on the right and have
\begin{eqnarray}
&&\sum_{j=-\infty}^\infty 2^{2j} \|\Delta_j u\|^2_{L^\infty}
= \|u\|^2_{\mathring{B}^1_{\infty,2}}\le C\,\|\nabla u\|^2_{\mathring{B}^0_{\infty,1}} <\infty, \label{ddee0}\\
&&\sum_{j=-\infty}^\infty\left[\sum_{k\ge j-1} 2^{j-k} \|\nabla \Delta_k u\|_{L^\infty}\,
\|\Delta_k (u^N-u)\|_{L^2}\right]^2 \nonumber\\
&&\qquad \le \sum_{j=-\infty}^\infty\|\nabla \Delta_j u\|^2_{L^\infty}\,
\|\Delta_j (u^N-u)\|^2_{L^2} \le C\,\|u^N-u\|_{L^2}^2\, \|\nabla u\|^2_{\mathring{B}^0_{\infty,1}}, \label{ddee1}
\end{eqnarray}
where the Besov embedding $\mathring{B}^0_{\infty,1}\hookrightarrow \mathring{B}^0_{\infty,2}$ is used
in the first inequality and Young's inequality for series convolutions is used in the second inequality.
Noticing that $\frac2r=\sigma$, we obtain
by H\"{o}lder's inequality
\begin{eqnarray}
&& \sum_{j=-\infty}^\infty 2^{2(1-\sigma)j}
 \|\Delta_j (u^N-u)\|^{2-\frac4r}_{L^\infty} \|\Delta_j (u^N-u)\|^{\frac4r}_{L^2} \nonumber \\
&& \qquad \le \left[\sum_{j=-\infty}^\infty 2^{2j} \|\Delta_j (u^N-u)\|^{2}_{L^\infty}\right]^{1-\sigma}
\left[\sum_{j=-\infty}^\infty \|\Delta_j (u^N-u)\|^2\right]^\sigma \nonumber\\
&& \qquad \le \|u^N-u\|^{1-\sigma}_{\mathring{B}^1_{\infty,2}}\, \|u^N-u\|^\sigma_{L^2} \nonumber\\
&& \qquad \le C\,\left(\|\nabla u^N\|_{\mathring{B}^0_{\infty,1}} + \|\nabla u\|_{\mathring{B}^0_{\infty,1}}\right)^{1-\sigma}
\, \|u^N-u\|^\sigma_{L^2}. \label{ddee2}
\end{eqnarray}
Inserting (\ref{ddee0}), (\ref{ddee1}) and (\ref{ddee2}) in (\ref{dd0}), we obtain
\begin{eqnarray*}
&& \|u\cdot\nabla(u^N-u)\|^2_{{\mathring{H}_*^{-\sigma}}} \le C\, \|u^N-u\|_{L^2}^2
\|\nabla u\|^2_{\mathring{B}^0_{\infty,1}}\\
&& \qquad + \,C\,\|u\|^2_{L^{q}}\,\left(\|\nabla u^N\|_{\mathring{B}^0_{\infty,1}}
+ \|\nabla u\|_{\mathring{B}^0_{\infty,1}}\right)^{1-\sigma}
\, \|u^N-u\|^\sigma_{L^2}.
\end{eqnarray*}
Therefore, (\ref{l2con}) implies
$$
\|u\cdot\nabla(u^N-u)\|^2_{{\mathring{H}_*^{-\sigma}}} \to 0, \qquad\mbox{as $N\to \infty$}.
$$
It then follows from (\ref{diff0}) that
\begin{equation} \label{noncon}
\|\mathbb{P} J_N(u^N\cdot\nabla  u^N) -\mathbb{P} (u\cdot\nabla u)\|_{\mathring{H}_*^{-\sigma}}
\to 0, \qquad\mbox{as $N\to \infty$}.
\end{equation}
As a consequence of (\ref{lincon}) and (\ref{noncon}), as $N\to \infty$,
$$
\partial_t u^N =- \mathbb{P} J_N(u^N\cdot\nabla  u^N)
 -\nu u^N + \mathbb{P} J_N(\theta^N \mathbf{e}_2)
$$
converges strongly in $\mathring{H}_*^{-\sigma}$. On the other hand, since $u^N \to u$ in $L^2$,
$$
\partial_t u^N \to \partial_t u
$$
in the distributional sense. Therefore,
\begin{equation} \label{ptcon}
\|\partial_t u^N - \partial_t u\|_{\mathring{H}_*^{-\sigma}}
\to 0, \qquad\mbox{as $N\to \infty$}.
\end{equation}
In summary, we have shown that, by letting $N\to \infty$ in (\ref{appeq})
and invoking the limits in (\ref{lincon}), (\ref{noncon}) and (\ref{ptcon}),
$$
\partial_t u + \mathbb{P} (u\cdot\nabla u) + \nu u = \mathbb{P} (\theta \mathbf{e}_2)
\qquad \mbox{in\,\, $\mathring{H}_*^{-\sigma}$},
$$
which can also be written as
$$
\partial_t u + u\cdot\nabla u + \nu u = -\nabla p + \theta \mathbf{e}_2, \quad \nabla\cdot u=0.
$$
In a similar manner, we can also show that
$$
\partial_t \theta + u\cdot\nabla \theta + \lambda\, \theta = 0 \qquad \mbox{in\,\, $\mathring{H}_*^{-\sigma}$}.
$$

\vskip .1in
{\bf Step 5. Uniqueness}. This step is devoted to showing that any two solutions $(u^{(1)}, \theta^{(1)})$
and $(u^{(2)}, \theta^{(2)})$ obeying (\ref{regclass}) must coincide.
It is clear that the difference $(v, \Theta)$ with
$$
v= u^{(1)} - u^{(2)}, \quad \Theta =\theta^{(1)}-\theta^{(2)}
$$
satisfies
\begin{equation} \label{diffeq}
\begin{cases}
\partial_t  v + \mathbb{P}(u^{(1)}\cdot\nabla v) + \mathbb{P}(v \cdot\nabla u^{(2)}) + \nu\,v =\mathbb{P}(\Theta \mathbf{e}_2),\\
\partial_t  \Theta + u^{(1)}\cdot\nabla \Theta + v \cdot\nabla \theta^{(2)} + \lambda\,\Theta =0,\\
v(x,0) =0, \quad \Theta(x,0) =0.
\end{cases}
\end{equation}
Taking the inner product with $(v, \Theta)$ yields
\begin{eqnarray*}
&& \frac12 \frac{d}{dt} \left(\|v\|_{L^2}^2 + \|\Theta\|_{L^2}^2\right) + \nu\, \|v\|_{L^2}^2
+ \lambda\,\|\Theta\|_{L^2}^2 \\
&& \qquad \le \|v\|_{L^2}\,\|\Theta\|_{L^2} + \left|\int v \cdot\nabla u^{(2)}\cdot v\right|
+ \left|\int v \cdot\nabla \theta^{(2)} \,\theta\right|.
\end{eqnarray*}
Bounding the last two terms on the right-hand side by H\"{o}ler's inequality and applying the embedding
$\mathring{B}^0_{\infty,1} \hookrightarrow L^\infty$ yield
\begin{eqnarray*}
&& \frac12 \frac{d}{dt} \left(\|v\|_{L^2}^2 + \|\Theta\|_{L^2}^2\right) + \nu\, \|v\|_{L^2}^2
+ \lambda\,\|\Theta\|_{L^2}^2 \\
&& \qquad \le C\,\left(1+ \|(\nabla u^{(2)}, \nabla\theta^{(2)})\|_{\mathring{B}^0_{\infty,1}}\right)
\left(\|v\|_{L^2}^2 + \|\Theta\|_{L^2}^2\right).
\end{eqnarray*}
Gronwall's inequality then implies that $(v, \Theta) \equiv 0$. We have completed all the steps and thus the whole
proof of Theorem \ref{main}.
\end{proof}

\vskip .3in
\appendix
\section{Functional spaces}
\label{Besov}

This appendix provides the definitions of some of the functional spaces and related facts
used in the previous sections. Materials presented in this appendix can be found in several
books and many papers (see, e.g., \cite{BCD,BL,MWZ,RS,Tri}).

\vskip .1in
We start with several notation. $\mathcal{S}$ denotes
the usual Schwarz class and ${\mathcal S}'$ its dual, the space of
tempered distributions. ${\mathcal S}_0$ denotes a subspace of ${\mathcal
S}$ defined by
$$
{\mathcal S}_0 = \left\{ \phi\in {\mathcal S}: \,\, \int_{\mathbb{R}^d}
\phi(x)\, x^\gamma \,dx =0, \,|\gamma| =0,1,2,\cdots \right\}
$$
and ${\mathcal S}_0'$ denotes its dual. ${\mathcal S}_0'$ can be identified
as
$$
{\mathcal S}_0' = {\mathcal S}' / {\mathcal S}_0^\perp = {\mathcal S}' /{\mathcal P}
$$
where ${\mathcal P}$ denotes the space of multinomials.

\vspace{.1in} To introduce the Littlewood-Paley decomposition, we
write for each $j\in \mathbb{Z}$
\begin{equation*}\label{aj}
A_j =\left\{ \xi \in \mathbb{R}^d: \,\, 2^{j-1} \le |\xi| <
2^{j+1}\right\}.
\end{equation*}
The Littlewood-Paley decomposition asserts the existence of a
sequence of functions $\{\Phi_j\}_{j\in {\mathbb Z}}\in {\mathcal S}$ such
that
$$
\mbox{supp} \widehat{\Phi}_j \subset A_j, \qquad
\widehat{\Phi}_j(\xi) = \widehat{\Phi}_0(2^{-j} \xi)
\quad\mbox{or}\quad \Phi_j (x) =2^{jd} \Phi_0(2^j x),
$$
and
$$
\sum_{j=-\infty}^\infty \widehat{\Phi}_j(\xi) = \left\{
\begin{array}{ll}
1&,\quad \mbox{if}\,\,\xi\in {\mathbb R}^d\setminus \{0\},\\
0&,\quad \mbox{if}\,\,\xi=0.
\end{array}
\right.
$$
Therefore, for a general function $\psi\in {\mathcal S}$, we have
$$
\sum_{j=-\infty}^\infty \widehat{\Phi}_j(\xi)
\widehat{\psi}(\xi)=\widehat{\psi}(\xi) \quad\mbox{for $\xi\in {\mathbb
R}^d\setminus \{0\}$}.
$$
In addition, if $\psi\in {\mathcal S}_0$, then
$$
\sum_{j=-\infty}^\infty \widehat{\Phi}_j(\xi)
\widehat{\psi}(\xi)=\widehat{\psi}(\xi) \quad\mbox{for any $\xi\in
{\mathbb R}^d $}.
$$
That is, for $\psi\in {\mathcal S}_0$,
$$
\sum_{j=-\infty}^\infty \Phi_j \ast \psi = \psi
$$
and hence
$$
\sum_{j=-\infty}^\infty \Phi_j \ast f = f, \qquad f\in {\mathcal S}_0'
$$
in the sense of weak-$\ast$ topology of ${\mathcal S}_0'$. For
notational convenience, we define
\begin{equation}\label{del1}
\mathring{\Delta}_j f = \Phi_j \ast f, \qquad j \in {\mathbb Z}.
\end{equation}

\begin{define}
For $s\in {\mathbb R}$ and $1\le p,q\le \infty$, the homogeneous Besov
space $\mathring{B}^s_{p,q}$ consists of $f\in {\mathcal S}_0' $
satisfying
$$
\|f\|_{\mathring{B}^s_{p,q}} \equiv \|2^{js} \|\mathring{\Delta}_j
f\|_{L^p}\|_{l^q} <\infty.
$$
\end{define}

\vspace{.1in}
We now choose $\Psi\in {\mathcal S}$ such that
$$
\widehat{\Psi} (\xi) = 1 - \sum_{j=0}^\infty \widehat{\Phi}_j (\xi),
\quad \xi \in {\mathbb R}^d.
$$
Then, for any $\psi\in {\mathcal S}$,
$$
\Psi \ast \psi + \sum_{j=0}^\infty \Phi_j \ast \psi =\psi
$$
and hence
\begin{equation*}\label{sf}
\Psi \ast f + \sum_{j=0}^\infty \Phi_j \ast f =f
\end{equation*}
in ${\mathcal S}'$ for any $f\in {\mathcal S}'$. To define the inhomogeneous Besov space, we set
\begin{equation} \label{del2}
\Delta_j f = \left\{
\begin{array}{ll}
0,&\quad \mbox{if}\,\,j\le -2, \\
\Psi\ast f,&\quad \mbox{if}\,\,j=-1, \\
\Phi_j \ast f, &\quad \mbox{if} \,\,j=0,1,2,\cdots.
\end{array}
\right.
\end{equation}
\begin{define}
The inhomogeneous Besov space $B^s_{p,q}$ with $1\le p,q \le \infty$
and $s\in {\mathbb R}$ consists of functions $f\in {\mathcal S}'$
satisfying
$$
\|f\|_{B^s_{p,q}} \equiv \|2^{js} \|\Delta_j f\|_{L^p} \|_{l^q}
<\infty.
$$
\end{define}

\vskip .1in
The Besov spaces $\mathring{B}^s_{p,q}$ and $B^s_{p,q}$ with  $s\in (0,1)$ and $1\le p,q\le \infty$ can be equivalently defined by the norms
$$
\|f\|_{\mathring{B}^s_{p,q}}  = \left(\int_{\mathbb{R}^d} \frac{(\|f(x+t)-f(x)\|_{L^p})^q}{|t|^{d+sq}} dt\right)^{1/q},
$$
$$
\|f\|_{B^s_{p,q}}  = \|f\|_{L^p} + \left(\int_{\mathbb{R}^d} \frac{(\|f(x+t)-f(x)\|_{L^p})^q}{|t|^{d+sq}} dt\right)^{1/q}.
$$
When $q=\infty$, the expressions are interpreted in the normal way.

\vskip .1in
Many frequently used function spaces are special cases of Besov spaces. The following proposition
lists some useful equivalence and embedding relations.
\begin{prop}
For any $s\in \mathbb{R}$,
$$
\mathring{H}^s \sim \mathring{B}^s_{2,2}, \quad H^s \sim B^s_{2,2}.
$$
For any $s\in \mathbb{R}$ and $1<q<\infty$,
$$
\mathring{B}^{s}_{q,\min\{q,2\}} \hookrightarrow \mathring{W}_{q}^s \hookrightarrow \mathring{B}^{s}_{q,\max\{q,2\}}.
$$
In particular, $\mathring{B}^{0}_{q,\min\{q,2\}} \hookrightarrow L^q \hookrightarrow \mathring{B}^{0}_{q,\max\{q,2\}}$.
\end{prop}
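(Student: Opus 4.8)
The plan is to split the statement into the $L^2$-equivalences, which follow from Plancherel's theorem alone, and the $L^q$ (equivalently $\mathring{W}^s_q$) embeddings, which rest on the Littlewood--Paley square-function theorem. Throughout, homogeneous spaces are understood modulo polynomials, i.e.\ in $\mathcal{S}_0'$.

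For $\mathring{H}^s\sim\mathring{B}^s_{2,2}$ and $H^s\sim B^s_{2,2}$ I would argue by Plancherel's theorem only. Since $\widehat{\mathring{\Delta}_j f}=\widehat{\Phi}_j\,\widehat f$ is supported in $A_j=\{2^{j-1}\le|\xi|<2^{j+1}\}$, one has $|\xi|^{2s}\sim 2^{2js}$ there, and the annuli $A_j$ overlap with uniformly bounded multiplicity; hence $\|f\|_{\mathring{H}^s}^2=\int|\xi|^{2s}|\widehat f(\xi)|^2\,d\xi\sim\sum_j 2^{2js}\int_{A_j}|\widehat f|^2=\sum_j 2^{2js}\|\mathring{\Delta}_j f\|_{L^2}^2=\|f\|_{\mathring{B}^s_{2,2}}^2$, the last step again using Plancherel on each block. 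The inhomogeneous identity is the same with $|\xi|^{2s}$ replaced by $\langle\xi\rangle^{2s}$ and the low-frequency block $\Psi\ast f$ treated separately.

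For the $\mathring{W}^s_q$ statement I would first reduce to $s=0$. Writing $\Lambda^s=(-\Delta)^{s/2}$, the operator $\Lambda^s\mathring{\Delta}_j$ has symbol $|\xi|^s\widehat{\Phi}_0(2^{-j}\xi)=2^{js}\,m(2^{-j}\xi)$ with $m(\eta)=|\eta|^s\widehat{\Phi}_0(\eta)$ a fixed smooth symbol supported away from the origin; convolution against the $L^1$-normalized dilates of $\check m$ is bounded on $L^q$ uniformly in $j$ for $1<q<\infty$, so $\|\mathring{\Delta}_j\Lambda^s f\|_{L^q}\sim 2^{js}\|\mathring{\Delta}_j f\|_{L^q}$ (up to absorbing neighbouring blocks). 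Since $\mathring{W}^s_q=\Lambda^{-s}L^q$, this gives $\|f\|_{\mathring{B}^s_{q,r}}\sim\|\Lambda^s f\|_{\mathring{B}^0_{q,r}}$ for every $r$ and reduces the whole chain to the case $s=0$ applied to $\Lambda^s f$. For $s=0$ the essential input is the Littlewood--Paley theorem: for $1<q<\infty$, $\|f\|_{L^q}\sim\big\|\big(\sum_j|\mathring{\Delta}_j f|^2\big)^{1/2}\big\|_{L^q}$. What remains is to compare this square-function norm with $\|\,\|\mathring{\Delta}_j f\|_{L^q}\,\|_{\ell^r}$ for $r=\min\{q,2\}$ and $r=\max\{q,2\}$, using the pointwise-in-$x$ inclusions $\ell^{\min\{q,2\}}\subset\ell^2\subset\ell^{\max\{q,2\}}$ together with the mixed-norm Minkowski inequality $\|g_j\|_{\ell^r(L^q)}\le\|g_j\|_{L^q(\ell^r)}$ for $r\ge q$ and the reverse for $r\le q$; working out the two regimes $q\ge 2$ and $1<q\le 2$ yields $\mathring{B}^0_{q,\min\{q,2\}}\hookrightarrow L^q\hookrightarrow\mathring{B}^0_{q,\max\{q,2\}}$, and the ``in particular'' clause is this with $\mathring{W}^0_q=L^q$.

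The only nonelementary ingredient, and hence the main point, is the Littlewood--Paley square-function estimate itself; it is the standard consequence of the Mikhlin--H\"{o}rmander multiplier theorem and Khintchine's inequality (randomizing the signs of the dyadic blocks), and may simply be quoted from any of the references listed at the head of the appendix. Everything else --- the Plancherel computation for $p=2$, the reduction from $\mathring{W}^s_q$ to $L^q$ via Mikhlin multipliers, and the mixed-norm bookkeeping --- is routine.
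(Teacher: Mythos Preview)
Your sketch is correct and is essentially the standard argument one finds in the references the paper cites (Bahouri--Chemin--Danchin, Bergh--L\"ofstr\"om, Triebel, etc.): Plancherel for the $p=2$ equivalences, reduction to $s=0$ via Bernstein/Mikhlin multipliers, and then the Littlewood--Paley square-function theorem combined with the $\ell^r$--$\ell^2$ inclusions and Minkowski's inequality for mixed norms to get the $L^q$ embeddings.

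However, you should be aware that the paper itself does \emph{not} prove this proposition. It is stated in the appendix as a collection of well-known facts, prefaced by the remark that ``materials presented in this appendix can be found in several books and many papers'' with references to \cite{BCD,BL,MWZ,RS,Tri}. So there is no ``paper's own proof'' to compare against; the authors simply quote the result. Your write-up supplies exactly the argument those textbooks give, so in that sense it matches the intended justification, but you are doing more than the paper does.
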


\vskip .1in
For notational convenience, we write $\Delta_j$ for
$\mathring{\Delta}_j$. There will be no confusion if we keep in mind that
$\Delta_j$'s associated with the homogeneous Besov spaces is defined in
(\ref{del1}) while those associated with the inhomogeneous Besov
spaces are defined in (\ref{del2}). Besides the Fourier localization operators $\Delta_j$,
the partial sum $S_j$ is also a useful notation. For an integer $j$,
$$
S_j \equiv \sum_{k=-1}^{j-1} \Delta_k,
$$
where $\Delta_k$ is given by (\ref{del2}). For any $f\in \mathcal{S}'$, the Fourier
transform of $S_j f$ is supported on the ball of radius $2^j$.

\vskip .1in
Bernstein's inequalities are useful tools in dealing with Fourier localized functions
and these inequalities trade integrability for derivatives. The following proposition
provides Bernstein type inequalities for fractional derivatives.
\begin{prop}\label{bern}
Let $\alpha\ge0$. Let $1\le p\le q\le \infty$.
\begin{enumerate}
\item[1)] If $f$ satisfies
$$
\mbox{supp}\, \widehat{f} \subset \{\xi\in \mathbb{R}^d: \,\, |\xi|
\le K 2^j \},
$$
for some integer $j$ and a constant $K>0$, then
$$
\|(-\Delta)^\alpha f\|_{L^q(\mathbb{R}^d)} \le C_1\, 2^{2\alpha j +
j d(\frac{1}{p}-\frac{1}{q})} \|f\|_{L^p(\mathbb{R}^d)}.
$$
\item[2)] If $f$ satisfies
\begin{equation*}\label{spp}
\mbox{supp}\, \widehat{f} \subset \{\xi\in \mathbb{R}^d: \,\, K_12^j
\le |\xi| \le K_2 2^j \}
\end{equation*}
for some integer $j$ and constants $0<K_1\le K_2$, then
$$
C_1\, 2^{2\alpha j} \|f\|_{L^q(\mathbb{R}^d)} \le \|(-\Delta)^\alpha
f\|_{L^q(\mathbb{R}^d)} \le C_2\, 2^{2\alpha j +
j d(\frac{1}{p}-\frac{1}{q})} \|f\|_{L^p(\mathbb{R}^d)},
$$
where $C_1$ and $C_2$ are constants depending on $\alpha,p$ and $q$
only.
\end{enumerate}
\end{prop}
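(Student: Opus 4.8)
The plan is to reduce both assertions to the unit frequency scale $j=0$ by a dilation argument and then invoke Young's convolution inequality with an explicit kernel. First I would fix an auxiliary multiplier: choose $\varphi\in C_c^\infty(\mathbb{R}^d)$ with $\varphi\equiv 1$ on $\{|\xi|\le K\}$ and $\operatorname{supp}\varphi\subset\{|\xi|\le 2K\}$ for part 1), or $\varphi\in C_c^\infty$ supported in an annulus and equal to $1$ on $\{K_1\le|\xi|\le K_2\}$ for part 2). Since $\widehat f$ is supported on the prescribed set, $\widehat{(-\Delta)^\alpha f}(\xi)=|\xi|^{2\alpha}\varphi(2^{-j}\xi)\,\widehat f(\xi)$, so $(-\Delta)^\alpha f = g_j * f$ with $\widehat{g_j}(\xi)=|\xi|^{2\alpha}\varphi(2^{-j}\xi)$. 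A change of variables in the inverse Fourier transform gives $g_j(x)=2^{j(d+2\alpha)}g(2^j x)$ with $g=\mathcal{F}^{-1}(|\cdot|^{2\alpha}\varphi)$, hence $\|g_j\|_{L^r}=2^{j(2\alpha+d(1-1/r))}\|g\|_{L^r}$ for every $1\le r\le\infty$.

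Next I would check that $g\in L^r$ for the exponents needed. In the annulus case (part 2)), $|\xi|^{2\alpha}\varphi(\xi)$ is $C^\infty$ with compact support, so $g\in\mathcal{S}(\mathbb{R}^d)$ and every $\|g\|_{L^r}$ is finite; the same is true of $\mathcal{F}^{-1}(|\cdot|^{-2\alpha}\varphi)$, which I will use for the lower bound. In the ball case (part 1)) the only subtlety is that $|\xi|^{2\alpha}$ fails to be smooth at $0$ when $2\alpha$ is not a nonnegative even integer; I would split $\varphi=\varphi_0+\varphi_1$ with $\varphi_0\in C_c^\infty$ supported in a small ball about the origin, note that $|\xi|^{2\alpha}\varphi_1\in C_c^\infty$ contributes a Schwartz kernel, and for $|\xi|^{2\alpha}\varphi_0$ use that it lies in $L^1$ (so its inverse transform is bounded and continuous) together with the homogeneity of $\mathcal{F}^{-1}(|\cdot|^{2\alpha})$, which decays like $|x|^{-d-2\alpha}$ and is therefore integrable at infinity since $d+2\alpha>d$; this yields $g\in L^1\cap L^\infty\subset L^r$ for all $1\le r\le\infty$.

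With the kernel controlled, the upper bounds in 1) and 2) follow from Young's inequality $\|g_j*f\|_{L^q}\le\|g_j\|_{L^r}\|f\|_{L^p}$ with $\tfrac1r=1+\tfrac1q-\tfrac1p$: substituting the scaling identity and using $1-\tfrac1r=\tfrac1p-\tfrac1q$ gives $\|(-\Delta)^\alpha f\|_{L^q}\le\|g\|_{L^r}\,2^{2\alpha j+jd(1/p-1/q)}\|f\|_{L^p}$, which is the claimed estimate with $C_1=\|g\|_{L^r}$ (and $C_2$ in 2) the same). For the lower bound in 2) I would run the argument in reverse: on $\operatorname{supp}\widehat f$ one has $\widehat f(\xi)=|\xi|^{-2\alpha}\varphi(2^{-j}\xi)\,\widehat{(-\Delta)^\alpha f}(\xi)$, so $f=h_j*(-\Delta)^\alpha f$ with $h_j(x)=2^{j(d-2\alpha)}h(2^j x)$, $h=\mathcal{F}^{-1}(|\cdot|^{-2\alpha}\varphi)\in\mathcal{S}$, and Young's inequality with $r=1$ gives $\|f\|_{L^q}\le\|h_j\|_{L^1}\|(-\Delta)^\alpha f\|_{L^q}=2^{-2\alpha j}\|h\|_{L^1}\|(-\Delta)^\alpha f\|_{L^q}$, i.e.\ $C_1\,2^{2\alpha j}\|f\|_{L^q}\le\|(-\Delta)^\alpha f\|_{L^q}$ with $C_1=\|h\|_{L^1}^{-1}$.

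I expect the only genuine obstacle to be the $L^1$-integrability of $g=\mathcal{F}^{-1}(|\cdot|^{2\alpha}\varphi)$ in the ball case for non-integer $\alpha$; everything else is bookkeeping of dilation exponents. If one restricts to $\alpha\in\mathbb{Z}_{\ge 0}$ this difficulty disappears, since then $|\xi|^{2\alpha}\varphi\in C_c^\infty$ and $g\in\mathcal{S}$ directly.
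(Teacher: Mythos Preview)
The paper does not actually prove this proposition: it is stated in the appendix as a standard fact about Littlewood--Paley theory, with references to the textbooks \cite{BCD,BL,MWZ,RS,Tri}, and no argument is given. So there is no ``paper's own proof'' to compare against.

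That said, your proposal is the standard textbook argument (essentially the one in \cite{BCD}): reduce to unit scale by dilation, represent $(-\Delta)^\alpha f$ as a convolution with a kernel whose Fourier transform is $|\xi|^{2\alpha}\varphi(\xi)$, and apply Young's inequality. The scaling bookkeeping and the annulus case (part 2), including the reverse inequality via $|\xi|^{-2\alpha}\varphi$, are handled correctly. The one place that deserves a firmer justification is the $L^1$ bound on $g=\mathcal{F}^{-1}(|\cdot|^{2\alpha}\varphi)$ in the ball case for non-integer $\alpha$: your sketch via ``homogeneity of $\mathcal{F}^{-1}(|\cdot|^{2\alpha})$'' is morally right but a cleaner route is to note that $m(\xi)=|\xi|^{2\alpha}\varphi(\xi)$ is compactly supported with $\partial^\beta m\in L^1(\mathbb{R}^d)$ for every multi-index $|\beta|<d+2\alpha$ (since $|\partial^\beta(|\xi|^{2\alpha})|\lesssim |\xi|^{2\alpha-|\beta|}$ near the origin), so integration by parts gives $|\check m(x)|\lesssim (1+|x|)^{-N}$ for any $N<d+2\alpha$, which suffices for $g\in L^1$ when $\alpha>0$; the case $\alpha=0$ is trivial. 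With that adjustment your argument is complete.
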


\vskip .4in
\section*{Acknowledgements}
Cao was partially supported by NSF grant DMS1109022. Wu was partially supported
by NSF grant DMS1209153. Xu was partially supported by NSFC (No.11171026),
BNSF (No.2112023) and the Fundamental Research Funds for the Central Universities of China.

\vskip .4in

\end{document}